\newtheorem{theorem}{Theorem}
\newtheorem{lemma}{Lemma}
\newtheorem{defi}{Definition}
\newtheorem{criteria}{Criterion}
\newcommand{\R}{\mathbb{R}}
\newcommand{\T}{\mathbb{T}}
\newcommand{\C}{\mathbb{C}}
\newcommand{\N}{\mathbb{N}}
\title{{Concave Toric Domains in Stark-type Mechanical Systems}}
\author{Airi Takeuchi, Lei Zhao}
\begin{document}

\maketitle

\begin{abstract}
{It has been shown {in \cite{Frauenfelder}} that the bounded component of the energy surface of the planer Stark problem after the Levi-Civita transformation are concave toric domains. In this paper, we present a different approach on the problem of determining concave toric domains in a family of integrable natural mechanical problems in the plane based on the computation of action-variables. We give criteria on the potentials for the bounded components of the energy hypersurfaces to be concave toric domains and apply these criteria to a class of problems.}
\end{abstract}

\section{Introduction} 
In symplectic geometry, interactions between the geometry and the dynamics of Hamiltonian systems are investigated. Many surprising results are unveiled. For a two degree of freedom integrable Hamiltonian system with a Hamiltonian $\T^{2}$-symmetry, its energy hypersurfaces are characterized equivalently by curves in the non-negative quadrant $(I_{1} \ge 0, I_{2} \ge0)$ of $\R^{2}$ connecting the axes, as the image of the corresponding moment map. The geometric properties of these curves are of deep relevance to their symplectic geometrical properties. The corresponding energy hypersurface is called a \emph{concave toric domain}, if this curve is the graph of a smooth non-increasing convex function. It is called a \emph{convex toric domain}, if this curve is the graph of a smooth non-increasing concave function. Interests on these special types of energy hypersurfaces comes from recent developments on symplectic embedding problems with the embedded contact homology (ECH) \cite{Hutchings}: In \cite{DCG}, it is shown that the ECH capacities are sharp when the symplectic embedding problem from the interior of a concave toric domain to a convex toric domain is concerned. The examples of concave toric domain has been studied in \cite{Ramos} for Lagrangian bidisks, in \cite{Frauenfelder} for the Stark problem, and in \cite{Pinzari} for the two-center problem. Elliptic functions and elliptic integrals are extensively used in the computations of the latter two references.

This note continues and complements the work \cite{Frauenfelder}. The Stark problem is among a class of natural mechanical systems that are simultanously regularizeable and separable with the Levi-Civita regularization. For these systems we shall establish criteria for the problem of determining whether the bounded components of the energy hypersurfaces are convex/concave domains, based on the computation of action-angle variables. These criteria do not apply in the case of Stark problem. Nevertheless, they can be applied in a simple fasion to the system called \emph{frozen-Hill's problem with centrifugal correction}, whose potential is given as 
\begin{equation}\label{eq: potential frozen hill c c}
V(q)=-\dfrac{m}{|q|}- gq_1^2 - g\frac{q_2^2}{4}, \qquad m >0,\, g \neq 0.
\end{equation}
{For this potential, there exist two different critical values when $g>0$ and there is no critical values when $g<0$. We call the lower one the first critical value. }
For this system we have

\begin{theorem}\label{thm: 1}
	%Consider the frozen Hill's problem with a centrifugal correction
	%\[
	%H  = \frac{|p|^2}{2}- \frac{m}{|q|} - gq_1^2 - g\frac{q_2^2}{4}=-f, \quad m >0
	%\]
	%on its $-f$-energy hypersurface. Assume that the energy $-f$ is below critical values $E_1$ and $E_2$.
	In the frozen-Hill's problem with centrifugal correction with potential given by \eqref{eq: potential frozen hill c c} with $g>0$, for energy below the first critical value, after Levi-Civita regularization, the corresponding bounded component of the energy hypersurface bounds a concave toric domain. 
\end{theorem}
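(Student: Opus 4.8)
The plan is to perform the Levi--Civita regularization explicitly, observe that the regularized problem separates into two \emph{identical} one-degree-of-freedom subsystems, and then verify the hypotheses of the general criterion above for this separated data.

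\textbf{Step 1: regularization and separation.} Setting $q=z^2$ with $z=u+iv$, lifting the transformation symplectically and passing to regularized time, the bounded component of $\{H=E\}$ becomes a bounded component of $\{\mathcal K=0\}$ (note $E<0$ below the first critical value), where $\mathcal K$ is a positive multiple of $\tfrac12|w|^2+|z|^2\bigl(V(z^2)-E\bigr)$. Since $|z^2|=|z|^2$ and
\[
|z|^2\Bigl[\bigl(\operatorname{Re}(z^2)\bigr)^2+\tfrac14\bigl(\operatorname{Im}(z^2)\bigr)^2\Bigr]=(u^2+v^2)\bigl[(u^2-v^2)^2+u^2v^2\bigr]=u^6+v^6 ,
\]
one obtains, after rescaling, $\mathcal K=G(u,p_u)+G(v,p_v)-m$ with $G(x,p):=\tfrac12 p^2+W(x)$ and $W(x):=|E|x^2-gx^6$. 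The essential point is that the two separated Hamiltonians \emph{coincide}: this is exactly what forces the relative coefficient $1:\tfrac14$ between the $q_1^2$- and $q_2^2$-terms in \eqref{eq: potential frozen hill c c}, and it is precisely the feature absent in the Stark potential (whose two separated potentials differ), explaining why the criterion does not apply there. The even potential $W=P(x^2)$, $P(y)=|E|y-gy^3$, has a nondegenerate minimum $W(0)=0$, rises to a barrier $W_{\max}=W(\pm x_\ast)$ at $x_\ast=(|E|/3g)^{1/4}$, and then decreases to $-\infty$.

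\textbf{Step 2: the bounded component and the reduction.} On $\{\mathcal K=0\}$ one has $G(u,p_u)=h$ and $G(v,p_v)=m-h$, and the bounded component is precisely the locus where both factors librate inside the well, i.e.\ where $h$ and $m-h$ both lie in $[0,W_{\max})$. Computing the critical set of $V$ (it lies on the coordinate axes, and the lower critical value, attained on the $q_1$-axis, is $V_1^c=-3\cdot 2^{-2/3}g^{1/3}m^{2/3}$) shows the hypothesis $E<V_1^c$ to be \emph{equivalent} to $m<W_{\max}$. Hence below the first critical value the parameter $h$ ranges over all of $[0,m]$, both factors remain strictly inside the barrier, and the general criterion applies: the bounded component bounds a toric domain whose defining curve is $\Gamma=\{(\mathcal I(h),\mathcal I(m-h)):h\in[0,m]\}$ with $\mathcal I(h):=\tfrac{1}{2\pi}\oint_{\{G=h\}}p\,dx$, and this domain is concave provided the period $T(h)=2\pi\,\mathcal I'(h)$ of the well is non-decreasing on $[0,m]$. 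Indeed, writing $I_1=\mathcal I(h)$ and $I_2=\mathcal I(m-h)$ along $\Gamma$, one finds $dI_2/dI_1=-T(m-h)/T(h)<0$ and
\[
\frac{d^2I_2}{dI_1^2}=\frac{2\pi\,T(m-h)}{T(h)^2}\Bigl(\frac{T'(h)}{T(h)}+\frac{T'(m-h)}{T(m-h)}\Bigr) ,
\]
which is $\ge 0$ as soon as $T'\ge 0$; moreover $\Gamma$ is smooth up to its endpoints because every $h\in[0,m]$ is a regular value of $G$ (since $m<W_{\max}$).

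\textbf{Step 3: monotonicity of the period (the main obstacle).} It remains to show $T'\ge 0$ on $[0,m]\subset[0,W_{\max})$. Writing the period as $T(h)\propto\int_0^{a(h)}dx/\sqrt{h-W(x)}$, with $a(h)$ the turning point, and substituting $x=a(h)\sigma$, the identity
\[
h-W(a\sigma)=W(a)-W(a\sigma)=|E|a^2(1-\sigma^2)-ga^6(1-\sigma^6)=a^2(1-\sigma^2)\bigl[|E|-ga^4(1+\sigma^2+\sigma^4)\bigr]
\]
yields
\[
T(h)\propto\int_0^1\frac{d\sigma}{\sqrt{(1-\sigma^2)\bigl[\,|E|-g\,a(h)^4(1+\sigma^2+\sigma^4)\,\bigr]}} .
\]
Since $W$ is strictly increasing on $[0,x_\ast)$ the turning point $a(\cdot)$ is strictly increasing, while $a(h)<x_\ast$ keeps the bracket positive; hence the integrand increases pointwise with $h$, so $T$ is strictly increasing. (Equivalently this is the concavity of $P$: $P''(y)=-6gy\le 0$ for $y\ge 0$.) Combined with Steps 1--2, this proves the theorem. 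The genuine obstacle is exactly this last monotonicity: for a generic separated potential it is delicate and may fail, but the ``quartic-in-$x^2$'' structure of $W$ collapses it to the elementary concavity of $P$.
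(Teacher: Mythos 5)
Your argument is correct, and it follows the paper's overall skeleton --- Levi--Civita regularization, separation into the two identical one-degree-of-freedom wells $W(x)=|E|x^{2}-gx^{6}$, and reduction of concavity of the toric domain to a second-derivative condition on the action of each factor (your formula $\frac{d^{2}I_{2}}{dI_{1}^{2}}=\frac{2\pi\bigl(T'(h)T(m-h)+T'(m-h)T(h)\bigr)}{T(h)^{3}}$ is exactly Criterion 2 of the paper rewritten via $T=2\pi\,\mathcal I'$) --- but it departs from the paper at the decisive verification step. The paper passes to symplectic polar coordinates in each factor and derives the pointwise sufficient condition $V_i'(z_i^{2})>0$, $V_i''(z_i^{2})<0$ (Criterion \ref{Cri: 3}), which it then checks on the bounded component using the critical values of the original potential; you instead prove strict monotonicity of the period $T(h)$ directly, via the rescaling $x=a(h)\sigma$ and the factorization $1-\sigma^{6}=(1-\sigma^{2})(1+\sigma^{2}+\sigma^{4})$, with the hypothesis ``energy below the first critical value'' entering exactly as the condition $m<W_{\max}$ that keeps the turning point below the barrier (your remark that this is an equivalence, not merely an implication, is a small sharpening of what the paper states). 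The paper's route buys generality: the same criterion handles non-identical factors and disposes of Theorems \ref{thm: 2}--\ref{thm: 4} with no further computation; your route buys a sharper, self-contained statement (strict increase of the period of the sextic well) without the star-shapedness and polar-coordinate apparatus. One small repair: $h=0$ is not a regular value of $G$ (it is the nondegenerate minimum), so smoothness of the boundary curve at its endpoints should be attributed to the standard fact that the action extends smoothly across a nondegenerate elliptic equilibrium --- indeed analytically here, since your rescaled integral depends on $h$ only through $a(h)^{2}$, which is analytic in $h$ near $0$ --- rather than to regularity of the value; this is a misattributed reason, not a gap, and the paper is no more explicit about the endpoints.
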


We also have
\begin{theorem}\label{thm: 2}
	%Consider the frozen Hill's problem with a centrifugal correction
	%\[
	%H  = \frac{|p|^2}{2}- \frac{m}{|q|} - gq_1^2 - g\frac{q_2^2}{4}=-f, \quad m >0
	%\]
	%on its $-f$-energy hypersurface. Assume that the energy $-f$ is below critical values $E_1$ and $E_2$.
	In the frozen-Hill's problem with centrifugal correction with potential given by \eqref{eq: potential frozen hill c c} with $g<0$, any negative energy-hypersurface after Levi-Civita regularization bounds a convex toric domain. 
\end{theorem}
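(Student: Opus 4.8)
\noindent\emph{Proof proposal for Theorem \ref{thm: 2}.}

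The plan is to reduce to the moment-map picture described in the introduction by passing to the Levi-Civita regularization, in which the system becomes a pair of identical nonlinear oscillators, and then to extract the required convexity from the action integral of a single oscillator. Writing $q=z^{2}$ with $z=u_{1}+iu_{2}$ and conjugate momentum $w$, reparametrizing time by $|z|^{2}$, and multiplying $H=E$ by $|z|^{2}$, the regularized Hamiltonian on $\{H=E\}$ becomes
\[
K(u,w)=\frac{w_{1}^{2}+w_{2}^{2}}{8}+|z|^{2}W(z^{2})-E\,|z|^{2},\qquad W(q)=-g\,q_{1}^{2}-\tfrac{g}{4}\,q_{2}^{2},
\]
and the bounded component of the energy hypersurface corresponds to $\{K=m\}$. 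The point of the particular coefficients in \eqref{eq: potential frozen hill c c} is the identity $(u_{1}^{2}+u_{2}^{2})\bigl[(u_{1}^{2}-u_{2}^{2})^{2}+u_{1}^{2}u_{2}^{2}\bigr]=u_{1}^{6}+u_{2}^{6}$, which gives $|z|^{2}W(z^{2})=-g(u_{1}^{6}+u_{2}^{6})$ and hence the separation $K=K_{1}+K_{2}$, $K_{i}(u_{i},w_{i})=\tfrac18 w_{i}^{2}+U(u_{i})$, with $U(u)=-g\,u^{6}-E\,u^{2}$. The commuting periodic flows of $K_{1},K_{2}$ generate the Hamiltonian $\T^{2}$-action, whose moment map is $(I_{1},I_{2})=(\mathcal{I}(K_{1}),\mathcal{I}(K_{2}))$, where $\mathcal{I}$ is the action variable of a single $K_{i}$-oscillator.

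For $g<0$ and $E<0$ we have $-g>0$ and $-E>0$, so $U$ is an even, proper, single-well potential with $U(0)=0$ and $U$ strictly increasing on $[0,\infty)$. Hence each level $\{K_{i}=c_{i}\}$, $c_{i}\ge0$, is a single periodic orbit (a point if $c_{i}=0$), and $\mathcal{I}(c)=\tfrac{1}{2\pi}\oint w\,du$ is smooth on $[0,m]$ with $\mathcal{I}(0)=0$ and $\mathcal{I}'(c)=T(c)/(2\pi)>0$, where $T(c)$ is the period. On $\{K=m\}$ one has $c_{1},c_{2}\ge0$ and $c_{1}+c_{2}=m$, so the image of the moment map is $\Gamma_{E}=\{(\mathcal{I}(c_{1}),\mathcal{I}(c_{2})):c_{1}+c_{2}=m,\ c_{i}\ge0\}$; as $\mathcal{I}$ is strictly increasing, $\Gamma_{E}$ is the graph of a strictly decreasing function $I_{2}=F(I_{1})=\mathcal{I}\bigl(m-\mathcal{I}^{-1}(I_{1})\bigr)$ joining the two axes. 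Differentiating $F$ twice and using that $\mathcal{I}^{-1}$ is convex (as the inverse of a concave increasing function) shows $F''\le0$ as soon as $\mathcal{I}''\le0$; so it remains to prove that $\mathcal{I}$ is concave, i.e. that $T(c)$ is non-increasing in $c$.

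To prove the monotonicity of $T$, I would substitute $w=U(u)$ in $T(c)\propto\int_{0}^{u_{+}(c)}(c-U(u))^{-1/2}\,du$ and rescale by $w=ct$, obtaining $T(c)=C\int_{0}^{1}\psi(ct)\bigl(t(1-t)\bigr)^{-1/2}\,dt$ with $C>0$ and $\psi(w):=\sqrt{w}\,\phi'(w)$, where $\phi:=(U|_{[0,\infty)})^{-1}$. Writing $U(u)=a\,u^{6}+b\,u^{2}$ with $a=-g>0$, $b=-E>0$, one computes $\phi'(w)=\bigl(2\phi(3a\phi^{4}+b)\bigr)^{-1}$ and $\sqrt{w}=\phi\sqrt{a\phi^{4}+b}$, hence $\psi(w)=\tfrac12\sqrt{a\phi^{4}+b}\,/\,(3a\phi^{4}+b)$. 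Setting $x=\phi(w)^{4}$, an increasing function of $w$, the derivative of $x\mapsto\sqrt{ax+b}/(3ax+b)$ equals $\frac{-a(3ax+5b)}{2(3ax+b)^{2}\sqrt{ax+b}}<0$, so $\psi$ is strictly decreasing; since $ct$ is increasing in $c$, $T(c)$ is then strictly decreasing. This gives $\mathcal{I}''<0$ on $(0,m)$, so $\Gamma_{E}$ is a concave graph and $\{K=m\}$ bounds a convex toric domain for every $E<0$.

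The step I expect to be the main obstacle is exactly this period-monotonicity: the action integral is hyperelliptic (a sextic under the root) with no elementary primitive, so one cannot evaluate it directly, and the argument must go through a structural reduction such as the Abel-transform identity above, whose payoff is the one-variable inequality for $\sqrt{ax+b}/(3ax+b)$. A minor point to handle carefully is the behaviour of $\mathcal{I}$ at the endpoint $c=0$ (the harmonic-oscillator limit, where $\mathcal{I}$ is smooth with $\mathcal{I}'(0)>0$), which is what ensures that $\Gamma_{E}$ reaches the two axes transversally, as required of the boundary of a toric domain.
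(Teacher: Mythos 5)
Your proposal is correct, and it reaches the paper's conclusion by a genuinely different route at the crucial step. The separation into two identical oscillators $K_i=\tfrac18 w_i^2+U(u_i)$ with $U(u)=-g\,u^6-E\,u^2$ (both coefficients positive for $g<0$, $E<0$), and the reduction of the convex-toric-domain property to concavity of the single action $\mathcal{I}(c)$ via $I_2=\mathcal{I}\bigl(m-\mathcal{I}^{-1}(I_1)\bigr)$, are exactly parallel to the paper's setup and to its Criteria 1--2. Where you diverge is in how concavity of $\mathcal{I}$ is established: the paper does not compute the period at all, but derives a pointwise sufficient condition on the potential (Criterion \ref{Cri: 3}) by passing to symplectic polar coordinates $w_1=\sqrt{A_1}\cos\theta$, $z_1=\sqrt{A_1}\sin\theta$ and reading off the sign of $\partial^2 A_1/\partial a^2=-V_1''\sin^4\theta/(\cos^2\theta/2+V_1'\sin^2\theta)^3$; for $g<0$, $f=-E>0$ one has $V_1'(s)=-3gs^2+f>0$ and $V_1''(s)=-6gs\ge 0$ everywhere, so Theorem \ref{thm: 2} follows in two lines. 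You instead prove monotone decrease of the period through the substitution $w=U(u)$, the rescaling $w=ct$, and the monotonicity of $\psi(w)=\sqrt{w}\,\phi'(w)=\tfrac12\sqrt{a\phi^4+b}/(3a\phi^4+b)$, a classical period-monotonicity device; your one-variable derivative computation is correct, and your handling of the harmonic-oscillator endpoint $c=0$ is the right thing to worry about. The trade-off: the paper's criterion is soft, reusable (it powers Theorems \ref{thm: 1}--\ref{thm: 4} uniformly), and requires only checking signs of $V_i'$, $V_i''$, but it needs the star-shapedness hypothesis behind the polar-coordinate trick and gives only a sufficient test; your argument is tailored to the sextic-plus-quadratic potential but is self-contained, yields strict concavity of $\mathcal{I}$ (hence strict concavity of the boundary curve), and quantifies the mechanism (decreasing period) that the paper's sign condition encodes implicitly.
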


We consider also systems with potentials
\begin{equation}\label{eq: more gen pot}
V(q)=-\frac{m}{|q|} - g\sum_{k= 0}^{n} \frac{C_k}{2^{2 k }} q_1^{2 n-  2 k } q_2^{2k},\quad  m >0,  g \neq 0, n \in \N
\end{equation}
where the coefficients $\{C_k\}_{k = 0}^n$ are defined inductively as 
\begin{equation}
\label{eq: Cn_definition}
C_0 = 1, \quad
C_k = (-1)^{k}- \left(\sum_{\ell=0}^{k-1} (-1)^{k-\ell}  
\begin{pmatrix}
2n -2 \ell\\
k- \ell 
\end{pmatrix}
C_{\ell}
  \right).
\end{equation}

We have
\begin{theorem}\label{thm: 3}
	%Consider the frozen Hill's problem with a centrifugal correction
	%\[
	%H  = \frac{|p|^2}{2}- \frac{m}{|q|} - gq_1^2 - g\frac{q_2^2}{4}=-f, \quad m >0
	%\]
	%on its $-f$-energy hypersurface. Assume that the energy $-f$ is below critical values $E_1$ and $E_2$.
	In the system with potential given by \eqref{eq: more gen pot} with $g>0$, for energy below the first critical value,
	 %$-\left(\frac{m(2k+1)}{2k}\right)^{\frac{2k}{2k+1}} (g(2k+1) )^{\frac{1}{2k+1}}$ 
	 after Levi-Civita regularization, the corresponding bounded component of the energy hypersurface bounds a concave toric domain, as long as $G_{k}=4 C_{k} (n-k) - C_{k+1} (k+1)$ are all positive for $k=1,\cdots, n$.
	 
\end{theorem}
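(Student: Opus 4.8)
The plan is to realize the regularized bounded energy hypersurface as that of a separable system and to read off the moment-map curve from the action variables of its two one-degree-of-freedom factors.

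First I would apply the Levi-Civita map $q=z^2$, $z=x+iy$, together with the time reparametrization by $|q|=|z|^2$. Writing the extra potential in \eqref{eq: more gen pot} as $-g\phi(q)$ and using $q_1=x^2-y^2$, $q_2=2xy$, the regularized Hamiltonian on its zero level is $\tfrac18(p_x^2+p_y^2)+|z|^2(V(z^2)-e)=\tfrac18(p_x^2+p_y^2)-m-g\,|z|^2\phi(z^2)-e(x^2+y^2)$. The crucial algebraic fact — and the purpose of \eqref{eq: Cn_definition} — is the identity $|z|^2\phi(z^2)=x^{4n+2}+y^{4n+2}$: with $u=x^2$, $v=y^2$ this reads $(u+v)\sum_k C_k(u-v)^{2n-2k}(uv)^k=u^{2n+1}+v^{2n+1}$, and expanding the left side and matching the coefficient of $u^{2n-m}v^m$ against $(u^{2n+1}+v^{2n+1})/(u+v)=\sum_{j}(-1)^j u^{2n-j}v^j$ returns exactly the recursion \eqref{eq: Cn_definition}. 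Since every critical value of $H$ is negative when $g>0$ (because then $V<0$), an energy below the first critical value satisfies $e<0$; writing $|e|=-e$, the regularized bounded hypersurface becomes $\{\tfrac18(p_x^2+p_y^2)+W(x)+W(y)=m\}$ with $W(t)=|e|t^2-g t^{4n+2}$, a separable system.

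Next I would analyse the one-dimensional factor $\mathcal K(x,p_x)=\tfrac18 p_x^2+W(x)$. Here $W$ is even with a single well: $W(0)=0$, $W$ increases on $[0,t_*]$ to $W_{\max}=W(t_*)$ and then decreases to $-\infty$, and a short computation shows that $W_{\max}=m$ precisely at the first critical energy, so $W_{\max}>m$ below it. Hence on the bounded component $\mathcal K_x=c_1$, $\mathcal K_y=c_2$ with $c_1+c_2=m$ and $c_1,c_2\in(0,W_{\max})$, each level being a single closed orbit about the origin, and on a neighbourhood of the hypersurface the ambient system carries a Hamiltonian $\T^2$-action with moment map $(I_1,I_2)$, where $I_j(c_j)=\tfrac1{2\pi}\oint_{\{\mathcal K=c_j\}}p\,dx$ and $I_j'(c_j)=T(c_j)/2\pi$ for the common period function $T$ of the well. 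The image of the bounded hypersurface is then the curve $c_1\mapsto(I_1(c_1),I_2(m-c_1))$; since $m<W_{\max}$ it joins both axes, it is smooth (the origin of each well is a non-degenerate center), and $dI_2/dI_1=-T(m-c_1)/T(c_1)<0$, so it is the graph of a smooth non-increasing function. Differentiating once more, $d^2I_2/dI_1^2\ge 0$ is equivalent to $T'(c_1)\,T(m-c_1)+T'(m-c_1)\,T(c_1)\ge 0$ for all admissible $c_1$; in particular it suffices that $T$ be non-decreasing on $(0,W_{\max})$. Rescaling $x$ turns $T$ into a constant times the period function of the universal well $\xi^2-\xi^{4n+2}$, and scaling out the turning point puts it in the form $T=\mathrm{const}\cdot\int_0^1\big((1-u^2)-\beta(1-u^{4n+2})\big)^{-1/2}\,du$ with $\beta\in(0,\tfrac1{2n+1})$ increasing in $c$; here $(1-u^2)-\beta(1-u^{4n+2})=(1-u^2)\big(1-\beta(1+u^2+\cdots+u^{4n})\big)>0$ on $(0,1)$, and differentiating in $\beta$ produces a manifestly positive integrand. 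This is precisely the step where the hypothesis on $G_k=4C_k(n-k)-C_{k+1}(k+1)$ is used in the paper's general criterion: when the convexity of the action curve is expressed directly in terms of the monomial coefficients of the potential \eqref{eq: more gen pot} rather than through the collapse identity above, the sign one must control is that of a non-negatively weighted combination of the $G_k$, so requiring $G_k>0$ for $k=1,\dots,n$ forces it. Putting everything together, the moment-map curve is a smooth non-increasing convex graph reaching both axes, i.e. the bounded component bounds a concave toric domain.

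I expect the monotonicity step to be the main obstacle: the period (equivalently the action) integral has integrable singularities at the turning points, so it cannot be differentiated naively, and the substitution fixing the turning points — which renders the sign of the derivative transparent — is where the real work lies; one must also verify the smoothness of the curve at the axes and that it actually reaches them, which rests on the coincidence $W_{\max}=m$ at the first critical value. The same scheme explains the companion results: for $n=1$ the $G_k$-conditions are vacuous, recovering Theorem~\ref{thm: 1}, and for $g<0$ the well $W(t)=|g|t^{4n+2}+|e|t^2$ is super-quadratic, so the period is non-increasing and the inequality above reverses, producing a convex toric domain as in Theorem~\ref{thm: 2}.
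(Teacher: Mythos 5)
Your separable reduction and the convexity mechanism are essentially sound, and they are closer to the paper's than you suggest: the paper's Criterion~\ref{Cri: 3} (obtained via symplectic polar coordinates and star-shapedness of the levels $\{K_i=a\}$) is precisely a pointwise sufficient condition for $I_1''>0$, $I_2''>0$, i.e.\ for your period function $T$ to be increasing in the energy of each one-dimensional well; your rescaled integral $T=\mathrm{const}\cdot\int_0^1\bigl((1-u^2)-\beta(1-u^{4n+2})\bigr)^{-1/2}du$ with $\beta$ increasing in the level is a legitimate alternative verification of that same intermediate fact, and, as you correctly observe, it needs nothing beyond $g>0$, $f>0$ and the confinement $c_1,c_2\in(0,W_{\max})$.

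The genuine gap is your treatment of the hypothesis $G_k=4C_k(n-k)-C_{k+1}(k+1)>0$. You guess that it enters the convexity step; it does not, neither in your argument (your scaling computation uses no such condition) nor in the paper's. In the paper the condition is consumed exactly once, in the Lemma preceding the proof of Theorem~\ref{thm: 3}: assuming a critical point of the unregularized potential $V$ of \eqref{eq: more gen pot} has $\tilde q_1\tilde q_2\neq 0$, subtracting the two critical-point equations yields $\tfrac{1}{q_1}\partial_{q_1}F-\tfrac{1}{q_2}\partial_{q_2}F=0$, whose coefficients are positive multiples of the $G_k$, so positivity of all $G_k$ forces every critical point onto a coordinate axis; the on-axis critical values are then computed to be $E_1<E_2$, and this uses $C_n=1$, which the paper establishes by a separate induction on the recursion \eqref{eq: Cn_definition}. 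That Lemma is what identifies ``the first critical value'' with $E_1$ and thereby converts the hypothesis ``energy below the first critical value'' into the quantitative inequality $f>-E_1$, which is exactly your statement $W_{\max}>m$ --- the step you dismiss as ``a short computation'' --- and it also underlies the asserted bounded/unbounded structure of the hypersurface. As written, your proof never actually uses the $G_k$ hypothesis anywhere, and it asserts without justification that $W_{\max}=m$ occurs precisely at the first critical energy. To close the gap you must prove (or cite) the Lemma: at the very least that the axis points are critical points of $V$ with values $E_1<E_2$ (already requiring $C_n=1$), and, to match the theorem as stated, that no off-axis critical points exist, which is where $G_k>0$ is really used.
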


We verify that the last condition is satisfied for $n=1,2,3,4,5$. We do not yet know whether this holds for all $n$.

\begin{theorem}\label{thm: 4}
	%Consider the frozen Hill's problem with a centrifugal correction
	%\[
	%H  = \frac{|p|^2}{2}- \frac{m}{|q|} - gq_1^2 - g\frac{q_2^2}{4}=-f, \quad m >0
	%\]
	%on its $-f$-energy hypersurface. Assume that the energy $-f$ is below critical values $E_1$ and $E_2$.
	In the system with potential given by \eqref{eq: more gen pot} with $g<0$, any negative energy-hypersurface after Levi-Civita regularization bounds a convex toric domain. 
\end{theorem}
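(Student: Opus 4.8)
The plan is to follow the same action-variable strategy that underlies Theorems 1–3, but to exploit the sign $g<0$ to get convexity directly, in parallel with the passage from Theorem 1 to Theorem 2. First I would recall the Levi-Civita regularization: setting $q = z^2$ (in the complex notation) and rescaling time, a negative energy level $\{H=-f\}$ with $f>0$ is transformed into a level set of a new Hamiltonian $K$ on $T^*\R^2$ of the form $K(z,w) = \tfrac14|w|^2 + |z|^2\bigl(f + V(z^2)\bigr) = \tfrac14|w|^2 - m + |z|^2 f + |z|^2\,\widetilde V(z^2)$, where $\widetilde V(q) = -g\sum_k \tfrac{C_k}{2^{2k}} q_1^{2n-2k} q_2^{2k}$ is the homogeneous polynomial perturbation of degree $2n$. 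The key structural point, established already in the earlier sections for the family \eqref{eq: more gen pot}, is that after regularization the system is \emph{separable in suitable coordinates} and carries a Hamiltonian $\T^2$-action whose moment map image is the curve $(I_1,I_2)$ to be analyzed; I would invoke that separation and the resulting formulas for the action integrals $I_1,I_2$ as functions along the regularized energy hypersurface.

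Next I would set up the moment-map curve. As in \cite{Frauenfelder} and in the proof of Theorem 2, one parametrizes the regularized energy hypersurface by the separation constant (or equivalently by one of the action variables) and writes $I_1, I_2$ as complete elliptic-type integrals, $I_j = I_j(s)$, over the allowed interval of the separated variable. The claim that the bounded component bounds a convex toric domain is the claim that the curve $s \mapsto (I_1(s), I_2(s))$ is the graph of a smooth, non-increasing, \emph{concave} function; equivalently, that $I_2$ is a decreasing function of $I_1$ with $\tfrac{d^2 I_2}{dI_1^2} \le 0$, together with the boundary behavior that the curve meets both coordinate axes. The monotonicity $dI_2/dI_1 < 0$ follows, as usual, from the fact that moving along the energy level trades one action for the other; the substantive content is the sign of the second derivative.

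For the concavity I would reduce, via the chain rule, $\tfrac{d^2 I_2}{dI_1^2} = \bigl(I_2'' I_1' - I_2' I_1''\bigr)/(I_1')^3$, to showing that a single explicit combination of the derivatives of the action integrals has a definite sign. Here is where $g<0$ enters decisively: in that regime the perturbing potential $-g\,\widetilde V(q)$ contributes with a sign that makes the effective one-dimensional potentials in the separated variables \emph{convex} (just as in the step from Theorem 1 to Theorem 2, where flipping the sign of $g$ turns the concave toric domain into a convex one, and no sign condition on the $C_k$ is needed). I would differentiate the action integrals under the integral sign, handle the integrable singularities at the endpoints of the interval of motion by the standard trick of integrating by parts or changing variables to remove the square-root singularity, and then show that the integrand of the resulting expression for $\tfrac{d^2 I_2}{dI_1^2}$ is pointwise of the correct sign. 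Finally I would check the two endpoint conditions: as $s$ runs to the ends of its interval one of $I_1, I_2$ vanishes while the other stays positive and finite, so the curve genuinely connects the two axes and the enclosed region is a bona fide toric domain.

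\textbf{Main obstacle.} The hard part is the same as in Theorems 1–3: controlling the sign of the second derivative $\tfrac{d^2 I_2}{dI_1^2}$ uniformly along the whole energy level, because the action integrals are not elementary and their derivatives involve delicate cancellations near the turning points. I expect that, unlike Theorem 3 (which needed the auxiliary positivity of the $G_k$), here the sign $g<0$ forces every term to cooperate, so the estimate should go through for all $n$ without extra hypotheses — but making the endpoint analysis and the under-the-integral differentiation rigorous, rather than formal, is where the real work lies.
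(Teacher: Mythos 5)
Your overall framing (Levi-Civita regularization, separability, the moment-map curve $(I_1,I_2)$, and the correct translation that ``convex toric domain'' means $I_2$ is a \emph{concave} function of $I_1$) matches the paper's setup, but your proposal stops exactly where the proof has to happen: you never establish the sign of $d^2I_2/dI_1^2$, and you explicitly defer the under-the-integral differentiation and the turning-point analysis as ``the real work.'' Saying that you expect $g<0$ to force every term to cooperate is not a proof of concavity of $I_2(I_1)$; that sign verification is the entire content of the theorem. You also never write down the separated one-dimensional potentials for the family \eqref{eq: more gen pot}, which is precisely where the hypotheses get checked, and which relies on the identity $(z_1^2+z_2^2)G(z_1^2-z_2^2,2z_1z_2)=-g(z_1^{4n+2}+z_2^{4n+2})$ built into the choice of the coefficients $C_k$ — you invoke this only implicitly.

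The anticipated hard analysis of elliptic-type integrals is in fact unnecessary. The paper's Criterion \ref{Cri: 3}, derived by writing each separated factor in symplectic polar coordinates $w_i=\sqrt{A_i}\cos\theta$, $z_i=\sqrt{A_i}\sin\theta$ and computing $A_i''=-V_i''\sin^4\theta/\bigl(\cos^2\theta/2+V_i'\sin^2\theta\bigr)^3$, reduces the concavity/convexity of the moment-map curve to pointwise signs of $V_i'$ and $V_i''$. For \eqref{eq: more gen pot} the regularized separated potentials are $V_1(z_1^2)=fz_1^2-gz_1^{4n+2}$ and $V_2(z_2^2)=fz_2^2-gz_2^{4n+2}$, so with $g<0$ and $f>0$ one has $V_i'(z_i^2)=f-g(2n+1)z_i^{4n}>0$ and $V_i''(z_i^2)=-2gn(2n+1)z_i^{4n-2}>0$, hence $I_1''<0$, $I_2''<0$, and $I_2$ is a concave function of $I_1$: the hypersurface bounds a convex toric domain, with no condition on the $C_k$ and no turning-point estimates. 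To repair your proposal, either actually carry out the sign estimate you postponed, or quote Criterion \ref{Cri: 3} together with these two one-line computations, as the paper does (this is also exactly the step from Theorem \ref{thm: 1} to Theorem \ref{thm: 2} that you alluded to).
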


\section{Separability of Stark-type problems}
We consider a class of mechanical systems in $\R^2$ defined with potential given in the form
\begin{equation}
	\label{eq: Stark_type_force_function}
{V(q):}=-\frac{m}{|q|} - G(q_1,q_2), \quad m >0,  \quad G \in C^{\infty}(\R^2 \setminus O, \R),
\end{equation}
which is a modification to the Kepler potential.

{It is known that the singularity of the Kepler problem can be regularized via the Levi-Civita regularization \cite{Levi-Civita}, which extends naturally to the type of problem under consideration. We recall this procedure.}

The Hamiltonian function of the system is 
\[
{H(p, q)}= \frac{|p|^2}{2}- \frac{m}{|q|} - G(q_1,q_2).
\]

{We consider its  $(-f)$-energy level $\{H=-f\}$, which is the $0$-energy level of the shifted Hamiltonian $H+f$:
\[
\Bigl\{H +f = \frac{|p|^2}{2}- \frac{m}{|q|} - G(q_1,q_2) + f = 0\Bigr\}.
\]}
The contangent lift of the complex square mapping
\[
{\C \setminus O \times \C \mapsto \C \setminus O \times \C}, \quad (z,w)\mapsto (q = z^2, p=\frac{w}{2 \bar{z}})
\]
is canonical, and pulls the shifted Hamiltonian $H+f$, together with its $0$-level, back to
\[
{\Bigl\{\frac{|w|^2}{8 |z|^2} - \frac{m}{|z|^2} - G(z_1^2 - z_2^2, 2z_1 z_2)+f=0.\Bigr\} \subset \C \setminus O \times \C}
\]
{Multiplication of the Hamiltonian function appearing on the left-hand side by a positive factor only reparametrizes the flow on this energy-level, since this modification does not change the hypersurface and in particular the kernel-distribution of the restricted symplectic form on it.}

{Now with the time reparametrization by multiplying the above Hamiltonian function by the factor $|z|^{2}$ we obtain
\[
K = \frac{|w|^2}{8} -m - (z_1^2+z_2^2)G(z_1^2 - z_2^2, 2z_1 z_2)+f(z_1^2+z_2^2) = 0.
\]
The transformed Hamiltonian system $K$ is regular if the function 
$$(z_1^2+z_2^2)G(z_1^2 - z_2^2, 2z_1 z_2)$$
can be extended to a smooth function in $(z_{1}, z_{2})$. Moreover, the system is separable if the function $(z_1^2+z_2^2)G(z_1^2 - z_2^2, 2z_1 z_2)$ is separable in $(z_1, z_2)$ coordinates. This motivates the following definition:}
\begin{defi}\label{def: 1}
	A {natural mechanical system defined in the plane} with a potential in the form \eqref{eq: Stark_type_force_function} is called {of} \emph{Stark-type} {if there exists functions $G_1, G_2 \in C^{\infty}(\R, \R)$ such that
	  \[(z_1^2+z_2^2)G(z_1^2 - z_2^2, 2z_1 z_2) = G_1(z_1) + G_2(z_2)\]
        	holds.}
\end{defi}

%\begin{rem}
%	{Clearly any choice sof $G_{1}, G_{2}$ determines the function $G$ in a unique way, which may be singular at the origin. Therefore there are infinitely many Stark-type problems.}
%\end{rem}

In the following we {briefly discuss some} important examples of Stark-type problems. 

\paragraph{Kepler Problem} The case $G=0$ is the Kepler problem. The Levi-Civita regularization procedure relates it to the Hooke problem in the plane.

\paragraph{Stark Problem} The Stark problem corresponds to the case  $G = g \, q_1$. where $0 \neq g \in\R$. This can be considered as a planar system with a gravitational field and an additional external constant force field. The shifted Hamiltonian of this system is expressed as 
\[
H + f = \frac{|p|^2}{2}- \frac{m}{|q|} - g q_1+f=0
\]
on its $-f$-energy hypersurface. The transformed Hamiltonian is separable: 
\[
K= \frac{|w|^2}{8} -m - g(z_1^4-z_2^4)+f(z_1^2+z_2^2) = 0,
\]
\paragraph{Frozen Hill's problem with centrifugal correction}

{The case with 
$$G=gq_1^2 + gq_2^2/4, \qquad g>0$$
describes the so-called Frozen Hill's problem with a repulsive centrifugal correction. Similarly, when $g <0$, it gives rise to the Frozen Hill's problem with attracting centrifugal correction.  The Hamiltonian of such a system is given by 
\[
H = \frac{|p|^2}{2}- \frac{m}{|q|} - gq_1^2 - g\frac{q_2^2}{4}, 
\]
The above-mentioned transformation on its $(-f)$-energy hypersurface transforms the system into the $m$-energy level of the Hamiltonian $K$:
\begin{equation}\label{eqn: K=m}
K= \frac{|w|^2}{8}  - g(z_1^6+z_2^6)+f(z_1^2+z_2^2) = m,
\end{equation}
after a time parameter change. This system is clearly separable in $(z_{1}, z_{2})$-coordinates. Moreover, the two separated Hamiltonians are of the same form. } 

\paragraph{More examples}
Further examples can be constructed by properly choosing the smooth functions $G_{1}, G_{2}$ in Definition \ref{def: 1}. This gives infinitely many Stark-type problems in the plane.

%\marginpar{{We should discuss the issue of concave convex domain issues in these models.}}

%\section{Convex and Concave Toric Domains}
%\marginpar{Recall the definitions and significances of these toric domains in this section.}

\section{Conditions on Concavity/Convexity}
We take a Hamiltonian of the form 
\[
K = \frac{|w|^2}{2}+V_1(z_1^2) + V_2(z_2^2) ,
\]
and we consider its $m$-energy hypersurface $\{K=m\}$, where $V_1, V_2, \in C^{\infty}(\R,\R) $. We write 
\[
K = K_1(z_1, w_1) + K_2(z_2,w_2),
\]
where 
$$K_1 = \frac{w_1^2}{2} + V_1(z_1^2), \qquad K_2 = \frac{w_2^2}{2} + V_2(z_2^2)$$ 
are {the decoupled Hamiltonians in the separation. }

We take {$a,b \in \R$} so that $a + b = m$. We assume that the level sets {$\{K_1=a\}$ and $\{K_2= b\}$} are all closed simple curves in $T^{*} \R \cong \R^{2}$. The {action variables are} computed as 
\[
I_1(a) := \frac{1}{2 \pi }\int_{K_1 =a} w_1 dz_1
\]
and 
\[
I_2(b) := \frac{1}{2 \pi }\int_{K_2 =b} w_2 dz_2.
\]
{From Stoke's theorem, $I_1$ and $I_2$ are the areas enclosed by the curves $\{K_1 =a\}$ and $\{K_2 =b\}$ respectively, which are strictly increasing with respect to $a$ and $b$ respectively.

{This monotonicity enables us to take the inverse functions, which are also strictly increasing:
\[
a = h_1(I_1), \quad b = h_2 (I_2).
\]%\marginpar{{The old way of writing, such as $a=h_{1} (I_{1}(a))$ and the like, is incorrect. }}
The condition $a + b = m$ translates into
\[
h_2 (I_2) = m - h_1(I_1).
\]
This determines $I_2$ as a smooth function of $I_1$. Consequently, by taking derivative we obtain 
\[
h_2'(I_2) \frac{d I_2}{d I_1} = - h_1'(I_1),
\]
which leads to 
\[
\frac{d I_2}{d I_1} = - \frac{h_1'(I_1)}{h_2'(I_2)}.
\]
which is always negative by the motononicity of the functions $h_{1}(I_{1}), \, h_{2}(I_{2})$.}

From this, we obtain 
\begin{align*}
\frac{d^2 I_2}{d I_1^2} &= -\frac{h_1''(I_1)h_2'(I_2) - h_1'(I_1) h_2''(I_2) \cdot  dI_2/dI_1 }{h_2'(I_2)^2}\\
&=-\frac{h_1''(I_1) h_2'^2(I_2)  + h_1'^2(I_1) h_2''(I_2) }{h_2'(I_2)^3}.
\end{align*}

Since $h_2'(I_2) = \frac{1}{I_2'(b)}>0$, we obtain 
\begin{equation}\label{eq: condition 111}
\frac{d^2 I_2}{d I_1^2} \gtrless 0 \iff h_1''(I_1)  h_2'^2(I_2) + h_1'^2(I_1) h_2''(I_2) \lessgtr 0.
\end{equation}

Thus, we have in particular

\begin{criteria} $I_{2}$ is a convex function of $I_{1}$ if 
$$h_1''(I_1)<0, \quad h_2''(I_2)<0.$$
Similarly,   $I_{2}$ is a concave function of $I_{1}$ if 
$$h_1''(I_1)>0, \qquad h_2''(I_2)>0.$$
\end{criteria}

From $$h_1''(I_1) = -\frac{I_1''(a)}{(I_1'(a))^3}, \qquad h_2''(I_2) = -\frac{I_2''(b)}{(I_2'(b))^3},$$
 we have equivalently

\begin{criteria} {$I_{2}$ is a convex function of $I_{1}$} if 
$${I''_{1}}(a)>0, \quad I_2''(b)>0.$$
 {$I_{2}$ is a concave function of ${I_{1}}$} if 
 $$I''(a)<0, \quad I_2''(b)<0.$$
\end{criteria}

{It is desirable to obtain criteria with conditions imposed only on the potentials $V_{1}$ and $V_{2}$. For this purpose we assume that the domain confined by the equation $K_{1}=a$ is star-shaped with respect to the origin. }%consider the domain in $(z_1,w_1)$-space confined by the curve $K_1 = a$ and 
We introduce the following symplectic polar coordinates 
\[
w_1 = \sqrt{A_1} \cos \theta,\quad z_1 = \sqrt{A_1} \sin \theta. 
\]
{With the star-shape condition, plugging these coordinates into condition $K_{1}=a$ uniquely determines a function $A_{1}=A_{1} (a, \theta)$.}
%Here, for any fixed $a$ and $\theta$, the value of $A$ is uniquely determined. 
In this coordinates, the action {variable} $I_1$ {is computed as}
\[
I_1 = \int_{0}^{2 \pi} \int_0^{A_1(a, \theta)} dA_1 d \theta{ =\int_{0}^{2 \pi} A_1(a, \theta) d \theta.}
\]
Thus we have $I_1''(a) \gtrless 0$ when {$\frac{d^2 A_1 (a, \theta)}{da^2} \gtrless 0$ holds} for almost all $\theta$. In the following, we denote $\frac{dA_1}{da}$ by $A'$ and $\frac{d^2A_1}{da^2}$ by $A''$.

We have
\[
\frac{A_1 \cos^2 \theta }{2} + V_1 (A_1 \sin^2\theta)= a.
\]
By differentiating with respect to $a$, we obtain 
\[
\frac{A_1' \cos^2 \theta }{2}+V_1'(A_1 \sin^2\theta) A_1'\sin^2 \theta= 1,
\] 
thus 
\[
A_1' = \frac{1}{{(\cos^2 \theta)}/2 + V_1'(A_1 \sin^2 \theta)\sin^2 \theta}.
\]
{Further,} we have
\[
\frac{A_1'' \cos^2 \theta }{2}+ V_1''(A_1 \sin^2\theta) A_1'^2 \sin^4 \theta+ V_1'(A_1\sin^2 \theta) A_1''\sin^2 \theta = 0{,}
\]

thus
\[
A_1''= - \frac{V_1''(A_1 \sin^2 \theta )\sin^4 \theta}{(\cos^2 \theta/2 + V_1'(A_1 \sin^2 \theta)\sin^2 \theta)^3}.
\]

From this, we have  $I_1''>0$ when 
$$V_1'(z_1^2)>0,\quad V_1''(z_1^2)<0.$$
Similarly, a sufficient condition for $I_2''>0$ is 
$$V_2'(z_2^2)>0, \quad V_1''(z_2^2)<0.$$

 {We summarize this discussion with the following Criterion:
\begin{criteria}\label{Cri: 3} $I_{2}$ is a convex function of $I_{1}$ if 
$$V_1'(z_1^2)>0, \,\, V_1''(z_1^2)<0 \quad V_2'(z_2^2)>0, \,\, V_1''(z_2^2)<0.$$

Similarly, $I_{2}$ is a concave function of $I_{1}$ if 
$$V_1'(z_1^2)>0, \,\, V_1''(z_1^2)>0 \quad V_2'(z_2^2)>0,\,\, V_1''(z_2^2)>0.$$
\end{criteria}
}

Note that all these criteria are far from being optimal. On the other hand, It is easy to check whether Criterion \ref{Cri: 3} is satisfied in concrete examples. When the concavity/convexity of the system is not determined by Criterion \ref{Cri: 3}, then more involved computation is inevitable. On the other hand, Criterion \ref{Cri: 3} gives open conditions. Together with Theorems \ref{thm: 1}, \ref{thm: 2} this shows the prolificacy of the existence of concave/convex toric domains in Stark-type systems.

{We now study concrete examples of Stark-type systems.}
\paragraph{Discussion on the Stark problem}
In the Stark problem we have 
$$V_1(z_1^2) = -gz_1^4 + fz_1^2, \quad V_2(z_2^2) = gz_2^4 + fz_2^2.$$
We compute
\[
V_1'(z_1^2) = -2g z_1^2 +f, \quad V_1''(z_1^2)= -4g 
\]
and
\[
V_2'(z_2^2) = 2g z_2^2 +f, \quad V_2''(z_2^2) = 4g.
\]
Thus $V_1''(z_1^2)$ and $ V_2''(z_2^2)$ are not of the same sign and therefore Criterion \ref{Cri: 3} does not determine the convexity/concavity of the energy hypersurfaces.

\paragraph{Frozen Hill's problem with centrifugal correction}
We consider now the frozen Hill's problem with centrifugal correction, in which we have
\[
V_1(z_1^2) = -gz_1^6 +f z_1^2, \quad V_2(z_2^2)= -g z_2^6 +f z_2^2.
\]
The regularized energy level is given in Eq. \ref{eqn: K=m}.

In this case, we obtain
\[
V_1'(z_1^2) = -3 g z_1^4 + f, \quad V_1''(z_1^2) = -6 g z_1^2
\]
and 
\[
V_2'(z_2^2) = -3 g z_2^4 + f, \quad V_2''(z_2^2) = -6 g z_2^2
\]

\textbf{Proof of Theorem \ref{thm: 1}}

We have $g>0$ in the hypothesis of Theorem \ref{thm: 1}, which implies $V_1''(z_1^2)<0$ and $V_2''(z_2^2)<0$.

%We now check the condition $V_1'(z_1^2)>0$. {The condition  $V_2'(z_2^2)>0$ follows by exactly the same argument.}

The potential of the {unregularized} Frozen Hill's problem with centrifugal correction
\[
V := -\frac{m}{|q|} - g q_1^2 -g \frac{q_2^2}{4}
\]
has four critical points
$(0, \pm (\frac{g}{2m})^{-1/3})$ and $(\pm(\frac{2g}{m})^{-1/3}, 0 ) $, with critical values
$
E_2= -\frac{3}{2} \left(\frac{m^2 g}{2}\right)^{\frac{1}{3}}
$
and 
$
E_1 =-3 \left(\frac{m^2 g}{4}\right)^{\frac{1}{3}}
$
respectively.
We have
\[
E_1 < E_2 < 0.
\]
In the case $-f< E_{1}$, the corresponding energy hypersurface projects to a bounded component and an unbounded component in the configuration space. After regularization, the energy hypersurface $\{K=m\}$ consists of a bounded, closed hypersurface and an unbounded hypersurface.

We consider the function $V_{1} (z_{1}^{2})$, which has a unique maximum at $z_1^2= \sqrt{\frac{f}{3g}}$ with value $\frac{2f}{9}\sqrt{\frac{3f}{g}}. $ It follows from  $-f< E_{1}$ that $\frac{2f}{9}\sqrt{\frac{3f}{g}}>m$. Consequently in the bounded component of $\{K=m\}$ we have $z_{1}^{2}<\sqrt{\frac{f}{3g}}$, which implies $V_1'(z_1^2)>0$. Analogously we have $V_2'(z_1^2)>0$ on this bounded component. Theorem \ref{thm: 1}} now follows from Criterion \ref{Cri: 3}. \qed

\textbf{Proof of Theorem \ref{thm: 2}} 

We have $g<0, f >0$ in this case, the potential $V$ does not have any critical points. Then
$$V_{1}'(z_{1}^{2})>0,\, V_1''(z_1^2)>0,\,V_{1}'(z_{1}^{2})>0\, V_2''(z_2^2)>0.$$
 And the conclusion follows from Criterion \ref{Cri: 3}. \qed

%The function $V_1(z_1^2)$ attain its local maximum value $V_{{loc,max}}=\frac{2f}{9}\sqrt{\frac{3f}{g}}$ at $z_1^2= \sqrt{\frac{f}{3g}}$. We now claim that $V_{{loc,max}}>m$.
%Indeed, we have 
%\begin{align*}
%V_{{loc,max}}>m &\Leftrightarrow f > 3 \left(\frac{m^2 g}{4}\right)^{\frac{1}{3}}=-E_2\\
%&\Leftrightarrow -f< E_2.
%\end{align*}
%From this and  $m>V_1(z_1^2)$, we have $V_{{loc,max}}> V_1(z_1^2)$ on the energy hypersurface $K_1=a$ which consists of a bounded and an unbounded components. On the bounded component of the energy hypersurface $K_2=b$, we conclude $V'_1(z_1^2)>0$. Since $V_1= V_2$, on this bounded component, we have $V'_2(z_1^2)>0$. In this case, we have concavity. 

%\begin{theorem}
	%Consider the frozen Hill's problem with a centrifugal correction
	%\[
	%H  = \frac{|p|^2}{2}- \frac{m}{|q|} - gq_1^2 - g\frac{q_2^2}{4}=-f, \quad m >0
	%\]
	%on its $-f$-energy hypersurface. Assume that the energy $-f$ is below critical values $E_1$ and $E_2$.
%	Consider the regularization of the frozen Hill's problem with a repulsive centrifugal correction with the energy value $-f$ below the critical values by Levi-Civita transformation:
%	\[
%	K = \frac{|w|^2}{8}  - g(z_1^6+z_2^6)+f(z_1^2+z_2^2)=m.
%	\]  
%	When $K_1 =\frac{w_1^2}{8} -g z_1^6 + fz_1^2 >0$ and $K_2=\frac{w_2^2}{8} -g z_2^6 + fz_2^2>0$, the energy hypersurface $K = m$ consists of a bounded and an unbounded components and the bounded one is concave. 
%\end{theorem}

\paragraph{More general Stark-type Systems}

We consider Stark-type Systems with potentials given by \eqref{eq: more gen pot}. 

Correspondingly we have 
$$G(z_1^2-z_2^2, 2z_1z_2)= -g\sum_{k = 0}^{2n} (-1)^{k} z_1^{2 k} z_2^{4n-2 k}.$$
And thus
\[
(z_1^2+ z_2^2)G(z_1^2-z_2^2, 2z_1z_2)= -g(z_1^2+z_2^2)\left(\sum_{k = 0}^{2n} (-1)^{k} z_1^{2 k} z_2^{4n-2 k}\right)  = -gz_1^{4n+2} - gz_2^{4n +2}.
\]
The regularized Hamiltonian is
\[
K = \frac{|w|^2}{8} + f (z_1^2+ z_2^2) - g (z_1^{4n+2} + z_2^{4n+2}).
\]
We consider its m-energy hypersurface$ \{K=m\}$.

We have $V_1(z_1^2)=f z_1^2- gz_1^{4n+2}$ and $V_2(z_2^2)=f z_2^2 - gz_2^{4n +2}$. Thus
\[
V'_1(z_1^2) = f - g(2n+1)z_1^{4n}, \quad V''_1(z_1^2)= -2gn(2n+1)z_1^{4n-2}
\]
and 
\[
V'_2(z_2^2) = f - g(2n+1)z_2^{4n}, \quad V''_2(z_2^2)= -2gn(2n+1)z_2^{4n-2}.
\]

\begin{lemma} {The generalized potential $V$ given by \eqref{eq: more gen pot} has four critical points 
	\[
	\left( \pm\left(m/(2ng)\right)^{\frac{1}{2n+1}},0\right),  \quad \left(0, \pm\left(2^{2n-1}m/ (ng)\right)^{\frac{1}{2n+1}}\right),
	\]
	with critical values
	\[
	E_1=-(2n+1)\left(\frac{m}{2n}\right)^{\frac{2n}{2n+1}} g ^{\frac{1}{2n+1}}
	\]
	and
	\[
	E_2 =-(2n+1)\left(\frac{m}{4n}\right)^{\frac{2n}{2n+1}} g ^{\frac{1}{2n+1}},
	\]
}
as long as $D_{k}:=4 C_{k} (n-k) - C_{k+1} (k+1)$ are all positive for $k=1,\cdots, n$.
\end{lemma}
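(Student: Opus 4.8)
The plan is to analyze the critical points of the potential $V$ in \eqref{eq: more gen pot} directly in Cartesian coordinates, by writing $\nabla V = 0$ as a system of two equations and exploiting the polynomial structure of the anisotropic term. Setting $V = -m/|q| - g\,P(q_1,q_2)$ with $P(q_1,q_2) = \sum_{k=0}^n 2^{-2k} C_k\, q_1^{2n-2k} q_2^{2k}$, I would first observe that both $m/|q|$ and $P$ are even in $q_1$ and even in $q_2$, so the critical set is symmetric under the two reflections, and it suffices to look for critical points on the closed first quadrant. The two candidate families in the statement lie on the axes, so the key claim to establish is twofold: (i) on each coordinate axis there is exactly one critical point (up to sign), with the stated coordinates and critical values; and (ii) there are no critical points with $q_1 q_2 \neq 0$. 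The role of the positivity hypothesis $D_k = 4C_k(n-k) - C_{k+1}(k+1) > 0$ for $k = 1,\dots,n$ should be precisely to rule out (or control) off-axis critical points, and identifying exactly where this inequality enters is the crux of the argument.

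For step (i), restricting to the $q_1$-axis, $V(q_1,0) = -m/|q_1| - g\, C_0\, q_1^{2n} = -m/|q_1| - g\, q_1^{2n}$ since $C_0 = 1$; differentiating gives $m/q_1^2 - 2ng\, q_1^{2n-1} = 0$, i.e. $q_1^{2n+1} = m/(2ng)$, which (for $g>0$) has the unique positive root $\left(m/(2ng)\right)^{1/(2n+1)}$, and substituting back yields the critical value $E_1 = -(2n+1)(m/(2n))^{2n/(2n+1)} g^{1/(2n+1)}$ after simplification. The $q_2$-axis computation is identical except that the relevant coefficient is $2^{-2n} C_n\, q_2^{2n}$, and here I would need the auxiliary fact that $C_n = (-1)^n$ — more precisely that $2^{-2n}|C_n| $ contributes the factor $2^{2n-1}$ in the critical coordinate — which should follow from the inductive definition \eqref{eq: Cn_definition}; alternatively one recognizes $P$ restricted to the axes as coming from $(z_1^2 \pm z_2^2)$-type substitutions used earlier to derive $(z_1^2+z_2^2)G = -g z_1^{4n+2} - g z_2^{4n+2}$, which forces $P(q_1,0)$ and $P(0,q_2)$ to have the right leading behavior. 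Matching signs, $E_1 < E_2 < 0$ follows since $m/(4n) < m/(2n)$ and the exponent $2n/(2n+1) > 0$.

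For step (ii), on the open first quadrant I would write the two equations $\partial_{q_1} V = 0$ and $\partial_{q_2} V = 0$ as
\[
\frac{m\, q_1}{|q|^3} = -g\, \partial_{q_1} P, \qquad \frac{m\, q_2}{|q|^3} = -g\, \partial_{q_2} P,
\]
and eliminate the common factor $m/(|q|^3)$ by cross-multiplying to obtain the single homogeneous polynomial relation $q_2\, \partial_{q_1} P - q_1\, \partial_{q_2} P = 0$. Since $P$ is homogeneous of degree $2n$ in $(q_1,q_2)$, this is a homogeneous polynomial equation in $(q_1,q_2)$, hence (dehomogenizing by $t = q_2^2/q_1^2 > 0$, say) a polynomial equation in $t$; the claim becomes that this polynomial has no positive root, and this is exactly the combinatorial inequality controlled by the $D_k$. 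Concretely, I expect $q_2 \partial_{q_1} P - q_1 \partial_{q_2} P$ to expand, after dividing by $q_1 q_2$, into $\sum_k \left(4C_k(n-k) - C_{k+1}(k+1)\right) 2^{-2k} q_1^{2n-2k-2} q_2^{2k}$ up to reindexing, i.e. into $\sum_k D_k\, 2^{-2k} q_1^{2n-2k-2} q_2^{2k}$; if every $D_k > 0$ (the $D_0$ term vanishing or being handled separately since $n - 0$ versus the recursion) then this sum is strictly positive on the open quadrant and the equation has no solution there. The main obstacle — and the part deserving the most care — is carrying out this expansion of $q_2\partial_{q_1}P - q_1\partial_{q_2}P$ cleanly, verifying that the coefficients collapse exactly to $D_k$ (which will require using the defining recursion \eqref{eq: Cn_definition} to simplify the binomial sums that arise), and checking the boundary index cases $k=0$ and $k=n$ so that the count of critical points is exactly four rather than more.
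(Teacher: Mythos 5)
Your strategy is essentially the paper's own: reduce by parity, compute the axis critical points directly, and exclude off-axis critical points by eliminating the Kepler term and showing that $q_2\,\partial_{q_1}P-q_1\,\partial_{q_2}P$ cannot vanish on the open quadrant. The expansion you anticipate is exactly the paper's display: after dividing by $q_1q_2$ one gets $\sum_{k=0}^{n-1}\frac{D_k}{2^{2k+1}}\,q_1^{2(n-1-k)}q_2^{2k}$ (your coefficient $2^{-2k}$ is off by a harmless factor $2$), which is manifestly positive when the $D_k$ are positive; no binomial simplification or use of the recursion \eqref{eq: Cn_definition} is needed there, and the boundary cases you flagged resolve themselves: the relevant indices are $k=0,\dots,n-1$ (the $k=n$ term vanishes identically, and $D_0=4n-C_1=2n+1>0$ automatically). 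The $q_1$-axis computation, using $C_0=1$, is likewise the same as the paper's.

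The genuine gap is your auxiliary fact about $C_n$. You assert $C_n=(-1)^n$; this is false: already for $n=1$ the recursion gives $C_1=-1+\binom{2}{1}=1$, and in general $C_n=1$. The sign here is not cosmetic. On the $q_2$-axis the potential restricts to $-m/|q_2|-g\,C_n\,q_2^{2n}/2^{2n}$, so if $C_n$ were $-1$ (your claim for every odd $n$) then for $g>0$ this function would be strictly monotone in $|q_2|$ and the second pair of critical points would not exist at all, contradicting the lemma; with $C_n=1$ one gets precisely $\tilde q_2^{\,2n+1}=2^{2n-1}m/(ng)$ and the value $E_2$. The paper devotes the entire second half of its proof to establishing $C_n=1$, by an induction on the auxiliary sequence $d_k=(-1)^{k+1}\,2$ extracted from \eqref{eq: Cn_definition}. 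Your proposed fallback, reading off $P(0,q_2)$ from the separability identity by substituting $z_1=\pm z_2$, can be made to work and yields $P(0,q_2)=(q_2/2)^{2n}$, hence $C_n=1$; but as written you quote the identity with the sign $(z_1^2+z_2^2)G=-g(z_1^{4n+2}+z_2^{4n+2})$ (a sign slip already present in the paper's text -- the $n=1$ check against \eqref{eqn: K=m} shows the right-hand side should be $+g(z_1^{4n+2}+z_2^{4n+2})$ with the conventions of \eqref{eq: more gen pot}), which is likely what led you to $(-1)^n$, and you do not verify that the recursion actually produces this identity -- a verification essentially equivalent to the induction the paper carries out. So this step must still be done, and with the correct sign.
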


\begin{proof} 
	{Let $(\tilde{q}_1, \tilde{q}_2)$ be a critical point of $V$. We first show that if $D_k$ are all positive for $k=1,\cdots, n$ then either $\tilde{q}_1=0$ or $\tilde{q}_2=0$.
	Set 
	\[F(q):= \sum_{k=0}^{n} \frac{C_k}{2^{2k}} q_1^{2n-2k} q_2^{2k}\]
	Then the potential $V$ is rewritten as
	\[
	V(q) = -\frac{m}{|q|} - g \cdot F(q)
	\]
	and its partial derivatives are computed as
	\begin{align*}
	&\frac{\partial V}{\partial q_1} = \frac{m \cdot q_1}{(q_1^2+q_2)^{3/2}} -g \cdot \frac{\partial F}{\partial q_1} \\
	&\frac{\partial V}{\partial q_2} = \frac{m \cdot q_2}{(q_1^2+q_2)^{3/2}} -g \cdot \frac{\partial F}{\partial q_2} 
	\end{align*}
Assume now that $\tilde{q}_1 \neq 0 $ and $\tilde{q}_2\neq 0$, then from $$\frac{\partial V}{\partial q_1} (\tilde{q}_1, \tilde{q}_2) = \frac{\partial V}{\partial q_2} (\tilde{q}_1, \tilde{q}_2) = 0$$ we obtain
\begin{align*}
&\frac{m }{(\tilde{q}_1^2+\tilde{q}_2)^{3/2}} -\frac{g}{\tilde{q}_1} \cdot \frac{\partial F}{\partial q_1} (\tilde{q}_1, \tilde{q}_2) = 0\\
&\frac{m }{(\tilde{q}_1^2+\tilde{q}_2)^{3/2}} -\frac{g}{\tilde{q}_2} \cdot \frac{\partial F}{\partial q_2} (\tilde{q}_1, \tilde{q}_2) = 0.
\end{align*}
From this, we deduce 
\begin{equation*}
\begin{split}
&\frac{1}{q_1} \cdot \frac{\partial F}{\partial q_1} (\tilde{q}_1, \tilde{q}_2)  - \frac{1}{q_2} \cdot \frac{\partial F}{\partial q_2} (\tilde{q}_1, \tilde{q}_2) \\
&=\sum_{k=0}^n \left(  \frac{C_k}{2^{2k}}(2n-2k) - \frac{C_{k+1}}{2^{2(k+1)}} 2(k+1) \right) \tilde{q}_1^{2n-2(k+1)} \tilde{q}_2^{2k}\\
&=0.
\end{split}
\end{equation*}

{This is however not possible if $4C_k(n-k) -C_{k+1}(k+1)=D_{k} >0$ for all $k = 1, \cdots, n$.}
%\[
%4C_k(n-k) -C_{k+1}(k+1) =D_k= 0.
%\]

{Thus at a critical point, either $\tilde{q}_{1}=0$ or $\tilde{q}_{2}=0$.}

We suppose $\tilde{q}_1 = 0${. T}hen the equation $$\partial V/\partial q_1 (\tilde{q}_1, \tilde{q}_2) =0$$ is satisfied for any $\tilde{q}_2 \neq 0$. The other equation $$\partial V/\partial q_2 (\tilde{q}_1, \tilde{q}_2) =0$$ is equivalent to 
\[
 \frac{m \cdot \tilde{q}_2}{(\tilde{q}_2)^{3/2}} - g \frac{C_n \cdot n \cdot \tilde{q}_2^{2n-1} }{2^{2n-1}} =0.
\]

{We claim $C_n=1$. Assuming this, the equation is easily solved} 

\[
\tilde{q}_2=\pm\left(2^{2n-1}m/ (ng)\right)^{\frac{1}{2n+1}}.
\]
The case $\tilde{q}_2 = 0$ are be treated similarly.
}

{{The fact {$C_{n}=1$} is established by the following computation:} From the inductive definition of $C_n$ given by \eqref{eq: Cn_definition}, we have
\begin{equation}
\label{eq: Cn_expression}
C_n= (-1)^n + (-1)^{n+1}(d_n + d_{n-1} + \cdots +d_1),
\end{equation}
where 
\[
d_1 =2, \quad
d_n:=
\begin{pmatrix}
2n\\
n
\end{pmatrix}
-
\begin{pmatrix}
2n\\
n-1
\end{pmatrix}
d_1
-
\begin{pmatrix}
2n\\
n-2
\end{pmatrix}
d_2
-\cdots
-
\begin{pmatrix}
2n\\
1
\end{pmatrix}
d_{n-1}.
\]
We show $d_n=(-1)^{n+1}2$ by induction. Suppose that $d_k = (-1)^{k+1}2$ for $k = 1, \cdots n-1$, then we have 
\begin{equation}
\begin{split}
d_n &= \begin{pmatrix}
2n\\
n
\end{pmatrix}
-
\begin{pmatrix}
2n\\
n-1
\end{pmatrix}
2
+
\begin{pmatrix}
2n\\
n-2
\end{pmatrix}
2
-\cdots
-
\begin{pmatrix}
2n\\
1
\end{pmatrix}
(-1)^{n}2\\
&=\left( \frac{2n(2n-1) \cdots (n+2)}{(n-1)(n-2) \cdots 1}  \right) \left( \frac{n+1}{n} -2 \right) +\begin{pmatrix}
2n\\
n-2
\end{pmatrix}
2
-\cdots
-
\begin{pmatrix}
2n\\
1
\end{pmatrix}
(-1)^{n}2
\\
&= - \frac{2n(2n-1) \cdots (n+2)}{(n-2)(n-3)\cdots 1}  +\begin{pmatrix}
2n\\
n-2
\end{pmatrix}
2
-\cdots
-
\begin{pmatrix}
2n\\
1
\end{pmatrix}
(-1)^{n}2 \\
&= \left(  \frac{2(2n-1) \cdots (n+3)}{(n-2)(n-3) \cdots 1} \right) (2n - (n+2))  - \begin{pmatrix}
2n\\
n-3
\end{pmatrix}
2
-\cdots
-
\begin{pmatrix}
2n\\
1
\end{pmatrix}
(-1)^{n}2 \\
&=- \frac{2n(2n-1) \cdots (n+3)}{(n-3)(n-4)\cdots 1}  -\begin{pmatrix}
2n\\
n-3
\end{pmatrix}
2
+\cdots
-
\begin{pmatrix}
2n\\
1
\end{pmatrix}
(-1)^{n+1}2 \\
& \ \vdots \\
&= (-1)^{n+1}  2.
\end{split}
\end{equation}
By plugging $d_n = (-1)^{n+1}2$ in \eqref{eq: Cn_expression}, we get $C_n=1$.
}
\end{proof}

\textbf{Proof of Theorem \ref{thm: 3}}

When $g>0$, we have 
$$V''_1(z_1^2)<0, \qquad V''_2(z_2^2)<0.$$
The maximum of the function is obtained at $z_1^{2}=(\frac{f}{g(2n+1)})^{1/2n}$ with value 
$$ f\left(\frac{f}{-g(2n+1)}\right)^{\frac{1}{2n}} + g \left(\frac{f}{g(2n + 1)}  \right)^{\frac{2n+1}{2n}}.$$
The condition that this value is larger than $m$ leads to 
$$ f > \left(\frac{m(2n+1)}{2n}\right)^{\frac{2n}{2n+1}} (g(2n+1) )^{\frac{1}{2n+1}} = -E_1.$$

%\[
%\begin{split}
%&V_{loc.min.} = f\left(\frac{f}{-g(2k+1)}\right)^{\frac{1}{2k}} + g \left(\frac{f}{g(2k + 1)}  \right)^{\frac{2k+1}{2k}}>m \\ \Leftrightarrow &  f > \left(\frac{m(2k+1)}{2k}\right)^{\frac{2k}{2k+1}} (g(2k+1) )^{\frac{1}{2k+1}},
%\end{split}
%\]
%where $V_{locmin}$ is the local minimum values of $V_1$ and $V_2$,
With this condition we have $V''_1(z_1^2)>0$ and $V''_2(z_2^2)>0$ for points from the bounded component of the corresponding regularized energy level $\{K=m\}$. This shows that the bounded component of $\{K=m\}$ bounds a concave toric domain. \qed

%\section{Convexity condition for energy hypersurfaces}
%Similarly as for concavity condition, we obtain convexity condition as follows:
%\[
%\frac{d^2 I_2}{d I_1^2}<0 \iff (h_1''(I_1)  h_2'^2(I_2) + h_1'^2(I_1) h_2''(I_2))\cdot h_2'(I_2)>0
%\] 
%Since $h_2'(I_2) = \frac{1}{I_2'(b)}>0$, we obtain 
%\[
%\frac{d^2 I_2}{d I_1^2}<0 \iff h_1''(I_1)  h_2'^2(I_2) + h_1'^2(I_1) h_2''(I_2)>0.
%\]
%Thus, particularly when $h_1''(I_1)>0$ and $h_2''(I_2)>0$ we have convexity of the function $I_2$. From $$h_1''(I_1) = -\frac{I_1''(a)}{(I_1'(a))^3}$$ and $$h_2''(I_2) = -\frac{I_2''(b)}{(I_2'(b))^3},$$ we have the convexity when $I''(a)<0$ and $I_2''(b)<0$. Again, by using symplectic polar coordinates, we obtain the sufficient condition for the convexity:
%$V_1''(z_1^2)>0$ and $V_2''(z_2^2)>0$.

%We again consider the frozen hill's problem with centrifugal corrections. When $g<0$, we always have $V_1''(z_1^2)>0$ and $V_2''(z_2^2)>0$. In addition, when $f>0$, we have $V_1'(z_1^2)>0$ and $V_2(z_2^2)>0$.
%\begin{theorem}
%	Consider the regularization of the frozen Hill's problem with an attracting centrifugal correction with the energy value $-f$ below zero by Levi-Civita transformation:
%	\[
%	K = \frac{|w|^2}{8}  - g(z_1^6+z_2^6)+f(z_1^2+z_2^2)=m,
%	\]  
%	where $g<0$.
%	For any $m>0$, the energy hypersurface at $K = m$ is bounded and convex.
%\end{theorem}
\textbf{Proof of Theorem \ref{thm: 4}}
When $g<0$, $f>0$, we have
$$V_{1}'(z_{1}^{2})>0,\, V_1''(z_1^2)>0,\,V_{1}'(z_{1}^{2})>0\, V_2''(z_2^2)>0.$$
We conclude with Criterion \ref{Cri: 3}. \qed

\vspace{0.5cm}
\textbf{Acknowledgement} A.T. and L.Z. are supported by DFG ZH 605/1-1, ZH
605/1-2.

\hspace{-1cm}
\begin{tabular}{@{}l@{}}%
	Airi Takeuchi\\
	\textsc{University of Augsburg, Augsburg, Germany.}\\
	\textit{E-mail address}: \texttt{airi1.takeuchi@uni-a.de}
\end{tabular}
\vspace{10pt}

\hspace{-1cm}
\begin{tabular}{@{}l@{}}%
	Lei Zhao\\
	\textsc{University of Augsburg, Augsburg, Germany.}\\
	\textit{E-mail address}: \texttt{lei.zhao@math.uni-augsburg.de}
\end{tabular}


\begin{thebibliography}{99}
\bibitem{DCG}D. Christofaro-Gardiner, Symplectic embeddings from concave toric domains into convex ones, \emph{J. Diff. Geom.} \textbf{112}: 199-232,  (2019).
\bibitem{Frauenfelder} U. Frauenfelder, The Stark problem as a concave toric domain, \emph{Geom. Dedicata}, \textbf{217}, 10, (2023).

\bibitem{Hutchings} M. Hutchings. Lecture Notes on Embedded Contact Homology. In: Bourgeois, F., Colin, V., Stipsicz, A. (eds) \emph{Contact and Symplectic Topology.} Bolyai Society Mathematical Studies, vol 26. Springer, (2014). 
\bibitem{Levi-Civita}{T. Levi-Civita, Sur la r\'egularisation du probl\`eme des trois corps, \emph{Acta. Math.} \textbf{42}: 204-219, (1920).}

\bibitem{Pinzari} G. Pinzari, Proof of a conjecture by H. Dullin and R. Montgomery, arXiv:2209.07097, (2022).


\bibitem{Ramos}V. G. B. Ramos, Symplectic embeddings and the Lagrangian bidisk, \emph{Duke Math. J. } \textbf{166}(9):1703--1738, (2017).



\end{thebibliography}
\end{document}